\newtheorem*{rep@theorem}{\rep@title}
\newcommand{\newreptheorem}[2]{%
	\newenvironment{rep#1}[1]{%
		\def\rep@title{#2 \ref{##1}}%
		\begin{rep@theorem}}%
		{\end{rep@theorem}}}
\theoremstyle{plain}
\newtheorem{thm}{Theorem}[section]
\newtheorem{lem}[thm]{Lemma}
\newtheorem{pro}[thm]{Proposition}
\newtheorem{cor}[thm]{Corollary}
\newtheorem*{claim*}{Claim}
\theoremstyle{remark}
\newtheorem{exm}[thm]{Example}
\newtheorem*{acknowledgements}{Acknowledgements}
\numberwithin{equation}{section}
\numberwithin{table}{section}
\newcommand{\N}{\mathbb{N}}
\newcommand{\Z}{\mathbb{Z}}
\newcommand{\C}{\mathbb{C}}
\renewcommand{\epsilon}{\varepsilon}
\renewcommand{\phi}{\varphi}
\renewcommand{\theta}{\vartheta}
\DeclareMathOperator{\real}{Re}
\author{Seungjai Lee} \address{Fakult\"at f\"ur Mathematik,
	Universit\"at Bielefeld, D-33501 Bielefeld, Germany}
\email{seungjai.lee@math.uni-bielefeld.de}
\keywords{Rational generating functions, Hilbert quasipolynomials, roots}
\subjclass[2020]{05A15, 12D10, 13D40}
\begin{document}
	
	\title[Hilbert quasipolynomials and their roots]{Hilbert quasipolynomials and their roots}
	\begin{abstract}
	We investigate the roots of Hilbert quasipolynomials arising from certain rational generating functions.
	\end{abstract}
	\date{\today}

	\maketitle

	\thispagestyle{empty}
\section{Introduction}
\subsection{Motivation}
Let $P(t)$ be a generating function of the form
\begin{align}\label{k=1}
P(t)&=\frac{U(t)}{(1-t)^d},&U(1)\neq 0,
\end{align}where $d\in\N$ and $U(x)\in\C[x]$ is of degree $e\leq d-1$. Then there is a polynomial $H(x)\in\C[x]$ of degree $d-1$ such that 
\[P(t)=\sum_{n\geq0}H(n)t^n.\]
For historical reasons we call $P$ the \textit{Poincaré series} and $H$ the \textit{Hilbert polynomial}.

For $a\in\Z$, let 
\[h_a(x)=\begin{cases}(x+1)(x+2)\cdots(x+a),&a\geq1,\\
1,&a<1,\end{cases}\]
and
\[S_{a}=\{p\in\C[x]\setminus\{0\}|p=h_{a}v,v\in\C[x]\textrm{ has all its roots on }\real(x)=-(a+1)/2\}.\] 
In \cite{R-V/02}, Rodriquez-Villegas proved the following result:
\begin{thm}[\cite{R-V/02}]
	Let the notation be as above. Suppose that all the roots of $U$ are on the unit circle. Then $H\in S_{d-1-e}.$
\end{thm}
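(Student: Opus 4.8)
The plan is to compute $H$ explicitly, split off its ``trivial'' integer zeros, and then induct on $e=\deg U$, introducing the roots of $U$ one at a time. First I would record the closed form of $H$: from the expansion $\frac{t^{j}}{(1-t)^{d}}=\sum_{n\ge 0}\binom{n+d-1-j}{d-1}t^{n}$, valid for $0\le j\le d-1$, together with $U(t)=\sum_{j=0}^{e}u_{j}t^{j}$, one gets $H(x)=\sum_{j=0}^{e}u_{j}\binom{x+d-1-j}{d-1}$. Evaluating at $x=-m$ for $1\le m\le d-1-e$ makes every binomial vanish, so $h_{d-1-e}\mid H$; writing $H=h_{d-1-e}\,v$ it then remains only to prove that the degree-$e$ cofactor $v$ has all its roots on the line $\real(x)=-(d-e)/2$.

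Next I would set up the induction. Write $U(t)=u_{e}\prod_{i=1}^{e}(t-\alpha_{i})$ with each $|\alpha_{i}|=1$ (and $\alpha_i\neq 1$, as $U(1)\neq 0$). Multiplying the numerator by one factor $(t-\alpha)$ turns the coefficient polynomial $H$ into $T_{\alpha}H:=H(x-1)-\alpha H(x)$. Starting from the constant numerator $u_{e}$, whose Hilbert polynomial is $\tfrac{u_{e}}{(d-1)!}\,h_{d-1}$, and applying $T_{\alpha_{1}},\dots,T_{\alpha_{e}}$ in turn, the theorem reduces to the following Key Lemma: if $f=h_{a}v$ with $\deg v=g$, all roots of $v$ on $\real(x)=-(a+1)/2$, and $|\alpha|=1$, $\alpha\neq1$, then $T_{\alpha}f=h_{a-1}w$ with $\deg w=g+1$ and all roots of $w$ on $\real(x)=-a/2$. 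Here $h_{a-1}\mid T_{\alpha}f$ is immediate, and the identities $h_{a}(x-1)=x\,h_{a-1}(x)$ and $h_{a}(x)=(x+a)\,h_{a-1}(x)$ yield $w(x)=x\,v(x-1)-\alpha(x+a)\,v(x)$. The critical line moves by $\tfrac12$, matching the loss of one trivial zero.

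The heart of the argument is the Key Lemma, which I would prove by a modulus-and-argument analysis on the target line $x=-a/2+iy$. Two balancings occur there, both forced by $|\alpha|=1$ together with the position of the roots of $v$: one has $|x|=|x+a|$, and, since every root $\rho$ of $v$ has real part $-(a+1)/2$ — exactly the midpoint of $\real(x)=-a/2$ and $\real(x-1)=-a/2-1$ — one has $|x-1-\rho|=|x-\rho|$ and hence $|v(x-1)|=|v(x)|$. Thus the two terms of $w$ have equal, strictly positive modulus $R(y)$ along the line, so $w=R(y)\big(e^{i\Theta_{1}}-e^{i\Theta_{2}}\big)$ vanishes exactly where $\Theta_{1}-\Theta_{2}\equiv 0\pmod{2\pi}$, with $\Theta_{1}-\Theta_{2}=\arg F-\arg\alpha$ for $F(x)=\frac{x\,v(x-1)}{(x+a)\,v(x)}$. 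Now all zeros of $F$ (namely $0$ and the points $\rho_{i}+1$) lie strictly to the right of the line, and all poles (namely $-a$ and the $\rho_{i}$) strictly to the left; consequently $\frac{d}{dy}(\Theta_{1}-\Theta_{2})=\real(F'/F)$ is a sum of strictly negative terms, so $\Theta_{1}-\Theta_{2}$ is strictly decreasing with total variation $2\pi(g+1)$. Since $\alpha\neq1$ keeps the target value $\arg\alpha$ off the two endpoint limits, it is attained exactly $g+1$ times, producing $g+1$ zeros of $w$ on the line; as $\deg w=g+1$, these are all of them.

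The main obstacle is precisely this last step: upgrading ``the two terms have equal modulus'' to ``all zeros lie on the line.'' Equal modulus alone only localizes zeros to the places where the two phases agree; the substantive point is the strict monotonicity of the phase difference, which rests on the clean separation of the zeros and poles of $F$ by the target line. That separation is exactly where the hypotheses are genuinely used — $|\alpha|=1$ to balance the two moduli, and the inductively guaranteed placement of the roots of $v$ on the bisecting line $\real(x)=-(a+1)/2$ to force $F$'s zeros and poles to strictly opposite sides. Dropping $|\alpha|=1$ destroys the balance and lets the zeros drift off the line, consistent with the theorem failing without the unit-circle hypothesis.
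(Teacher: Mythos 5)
Your proposal is correct, and its overall skeleton is the same as the paper's: the paper obtains this statement as the $k=1$ case of Theorem \ref{thm:RV.quasi}, whose proof is precisely your induction --- base case a monomial numerator with Hilbert polynomial $\tfrac{u_e}{(d-1)!}h_{d-1}$, then the roots of $U$ introduced one at a time via the operator $H(x)\mapsto H(x-1)-\alpha H(x)$. Where you genuinely diverge is in the proof of the key lemma (the paper's Lemma \ref{lem:realline}, specialized to $k=1$). The paper's argument is a short symmetry-and-modulus contradiction: writing $f(x)=h_{a-1}(x)r(x)$ and $f(x-1)=h_{a-1}(x)s(x)$, the roots of $r$ (namely $-a$ and the roots of $v$) and of $s$ (namely $0$ and the roots of $v(x-1)$) are mirror images of one another across the target line $\real(x)=-a/2$; at a root $\beta$ of $s-\alpha r$ the hypothesis $|\alpha|=1$ forces $|r(\beta)|=|s(\beta)|$, while if $\beta$ lay off the line every factor comparison $|\beta-\mu|<|\beta-\nu|$ would be strict, a contradiction. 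This needs no counting, no degree bookkeeping, and not even $\alpha\neq 1$. Your proof of the same lemma is instead the argument-principle/phase-monotonicity route (closer in spirit to Rodriguez-Villegas's original argument): equal moduli of the two terms of $w$ on the line, strict decrease of $\arg F$ because the zeros and poles of $F$ are separated by the line, total variation $2\pi(g+1)$, hence exactly $g+1$ phase crossings, matched against $\deg w=g+1$ (this is where $\alpha\neq1$, i.e.\ $U(1)\neq0$, is genuinely used). Your version is longer and has more moving parts, but it buys more: it shows the $g+1$ roots on the critical line are distinct, hence simple, and locates them as phase crossings, whereas the mirror-symmetry argument only places them on the line. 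One small point that both your write-up and the paper leave implicit: the identity ``new Hilbert polynomial $=H(x-1)-\alpha H(x)$'' also requires the $n=0$ coefficient of $(t-\alpha)P(t)$ to match, i.e.\ $H(-1)=0$; this does hold at every stage of the induction, because the intermediate numerators have degree $<d-1$ so the intermediate Hilbert polynomials retain the factor $(x+1)$, but it deserves a sentence in a complete write-up.
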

In other words, when we have a Poincaré series $P$ whose numerator $U$ has all its root on the unit circle, then the roots of its Hilbert polynomial $H$ are either ``trivial integer roots" $-1,\ldots,e+1-d$ or lying on a vertical line $\real(x)=-(a+1)/2$.

In this paper we want to generalise this for the following rational functions and \textit{quasipolynomials}.

\subsection{Hilbert quasipolynomials and rational generating functions}

Let $F:\Z\rightarrow\C$ be a function. We call $F$ a \textit{quasipolynomial} if there exist a positive integer $k$ and polynomials $F_{0}(n),\ldots,F_{k-1}(n)$ such that
\[F(n)=\begin{cases} F_{0}(n)&\textrm{if }n\equiv 0\mod k,\\
F_{1}(n)&\textrm{if }n\equiv 1\mod k,\\
\vdots&\\
F_{k-1}(n)&\textrm{if }n\equiv k-1\mod k.\\
\end{cases}\]
The minimal such $k$ is the \textit{period} of $F(n)$, and for this minimal $k$, the polynomials $F_{0}(n),\ldots,F_{k-1}(n)$ are the \textit{constituents} of $F(n)$.

There is a connection between quasipolynomials and generalised Poincaré series.
\begin{pro}[\cite{BS/18}, Proposition 4.5.1]
	Let $H:\Z\rightarrow\C$ be a function with associated generating function
	\[P(t):=\sum_{n\geq0}H(n)t^n.\]
	Then $H(n)$ is a quasipolynomial of degree $\leq d-1$	and period dividing $k$ if and only if 
	\[P(t)=\frac{U(t)}{(1-t^k)^{d}},\]
	where $U(t)$ is a polynomial of degree at most $kd-1$.
\end{pro}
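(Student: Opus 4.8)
The plan is to prove both implications by reducing to the classical case $k=1$ (essentially the situation of \eqref{k=1}) and splitting the power series according to the residue of $n$ modulo $k$. The base case I shall use is the elementary fact that for a sequence $(a_m)_{m\ge0}$ one has $\sum_{m\ge0}a_m s^m = V(s)/(1-s)^d$ for some $V\in\C[s]$ with $\deg V\le d-1$ if and only if there is a polynomial $G(x)\in\C[x]$ of degree $\le d-1$ with $a_m=G(m)$ for all $m\ge0$. This holds because $\{\binom{x+j}{j}:0\le j\le d-1\}$ is a basis of the space of polynomials of degree $\le d-1$ and $\sum_{m\ge0}\binom{m+j}{j}s^m=(1-s)^{-(j+1)}$; writing $G=\sum_{j} c_j\binom{x+j}{j}$ gives $V(s)=\sum_{j} c_j(1-s)^{d-1-j}$, and since the $(1-s)^{d-1-j}$ are $d$ polynomials of distinct degrees $0,\dots,d-1$, the assignment $G\mapsto V$ is a linear isomorphism onto the polynomials of degree $\le d-1$, yielding the equivalence together with the matching degree bounds.

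For the forward direction, suppose $H$ is a quasipolynomial of degree $\le d-1$ and period dividing $k$, with constituents $F_0,\dots,F_{k-1}$ of degree $\le d-1$. I would split
\[
P(t)=\sum_{r=0}^{k-1}\ \sum_{m\ge0}F_r(km+r)\,t^{km+r} = \sum_{r=0}^{k-1}t^r\sum_{m\ge0}F_r(km+r)\,(t^k)^m.
\]
Since $m\mapsto F_r(km+r)$ is a polynomial in $m$ of degree $\le d-1$, the base case (applied with $s=t^k$) gives $\sum_{m\ge0}F_r(km+r)(t^k)^m=V_r(t^k)/(1-t^k)^d$ with $\deg V_r\le d-1$. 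Summing over $r$ yields $P(t)=U(t)/(1-t^k)^d$ with $U(t)=\sum_{r=0}^{k-1}t^rV_r(t^k)$; here each term has degree at most $(k-1)+k(d-1)=kd-1$, so $\deg U\le kd-1$.

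For the converse, I would run the same computation backwards. Given $P(t)=U(t)/(1-t^k)^d$ with $\deg U\le kd-1$, group the monomials of $U$ by the residue of their exponent modulo $k$: writing each exponent as $km+r$ with $0\le r\le k-1$ gives $U(t)=\sum_{r=0}^{k-1}t^rU_r(t^k)$, and the bound $\deg U\le kd-1$ forces $\deg U_r\le d-1$ for each $r$. Then $P(t)=\sum_{r}t^r\,U_r(t^k)/(1-t^k)^d$, and the base case produces polynomials $G_r$ of degree $\le d-1$ with $U_r(s)/(1-s)^d=\sum_{m\ge0}G_r(m)s^m$. Comparing coefficients of $t^n$ for $n\equiv r\pmod k$ gives $H(n)=G_r((n-r)/k)$, a polynomial in $n$ of degree $\le d-1$; hence $H$ is a quasipolynomial of degree $\le d-1$ and period dividing $k$.

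The proof is essentially bookkeeping, so I expect no deep obstacle; the one point requiring care is the base case, specifically verifying that the correspondence $G\leftrightarrow V$ is a genuine bijection respecting the degree bounds, so that the degree-$\le d-1$ constraint on the constituents and the degree-$\le kd-1$ constraint on $U$ match up exactly, and keeping the residue-class decomposition of $U$ consistent with this degree count.
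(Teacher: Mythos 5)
Your proof is correct, and there is nothing in the paper to compare it against: the proposition is quoted from \cite{BS/18} without proof. Your argument --- splitting the series by residue classes modulo $k$, reducing to the $k=1$ equivalence via the basis $\binom{x+j}{j}$ and the isomorphism $G\mapsto V$, and matching the degree bounds $\deg V_r\le d-1$ with $\deg U\le kd-1$ --- is the standard one, essentially that of the cited reference.
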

So when $k=1$, we get the simplest form \eqref{k=1}, and a unique  Hilbert polynomial $H(n)$ for all $n\geq0$. 
\subsection{Main results}
Let $P(t)$ be a rational function as given above. For a given $U(t)=c_{0}+c_{1}t+\cdots+c_{e}t^{e}$ and $0\leq i \leq k-1$, let 
\[U_{i}(t)=c_{i}t^i+c_{k+i}t^{k+i}+\cdots+c_{q_{i}k+i}t^{q_{i}k+i}\] denote the polynomial that consists of all the terms in $U(t)$ of degree congruent to $i$ modulo $k$. If $U_i(t)\neq0$, we have a unique $q_i\in\N_0$ where $e_{i}=q_{i}k+i\leq e$ is the degree of $U_{i}(t)$ Otherwise the degree $e_i$ of $U_{i}(t)$  is $0$. 

Also, for $a\in\Z, k\in\N_+$, and $0\leq i\leq k$, let 
\[h_{a,k,i}(x)=\begin{cases}(x+k-i)(x+2k-i)\cdots(x+ak-i),&a\geq1,\\
1,&a<1,\end{cases}\]
and
\[S_{a,k,i}=\{p\in\C[x]\setminus\{0\}|p=h_{a,k,i}v,v\in\C[x]\textrm{ has all its roots on }\real(x)=-((a+1)k-2i)/2\}.\]
In this paper we prove the following results:
\begin{reptheorem}{thm:RV.quasi}
	Let \[P(t)=\frac{U(t)}{(1-t^k)^{d}}=\sum_{n\geq0}H(n)t^n,\] and let the notation be as above.
	For any $i$, suppose that $U_{i}(1)\neq0$ and all the non-zero roots are on the unit circle. Then $H_{i}\in S_{d-q_{i}-1,k,i}$.
\end{reptheorem}
\begin{reptheorem}{thm:quasi.global}
	Let $H_{\times}=\prod_{0\leq i\leq k-1}H_{i}$. If $U_{i}\neq0$ for all $i$, then $n\in\{-1,-2,\ldots,-(dk-e-1)\}$ satisfies $H_{\times}(n)=0$.
\end{reptheorem}
Theorem \ref{thm:RV.quasi} can be regarded as the generalised quasi-version of the result in \cite{R-V/02}. In particular, Theorem \ref{thm:RV.quasi}  proves \cite{R-V/02} when $k=1$. Theorem \ref{thm:quasi.global} gives some insights on how the Hilbert quasipolynomials of a given rational function behaves globally if you look all of them together.

\section{preliminaries}
In this section we prove some useful results to prove the main theorems.
\begin{lem}
	Let
	\[P(t)=\frac{c}{(1-t^k)^d}.\]
	Then we have $H_{0}(n)=\frac{c}{(d-1)!k^{d-1}}(n+k)(n+2k)\cdots(n+(d-1)k)$ and $H_{1}(n)=\cdots=H_{k-1}(n)=0$.
\end{lem}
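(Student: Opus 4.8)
The plan is to expand the right-hand side directly as a power series and read off its coefficients, so there is really only one idea involved. Starting from the standard binomial identity
\[
\frac{1}{(1-x)^d}=\sum_{m\geq0}\binom{m+d-1}{d-1}x^m,
\]
I would substitute $x=t^k$ to obtain
\[
P(t)=\frac{c}{(1-t^k)^d}=c\sum_{m\geq0}\binom{m+d-1}{d-1}t^{km}.
\]
This already exhibits $P(t)$ as a power series in $t$, so I can identify $H(n)$ as the coefficient of $t^n$.

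From this expansion the key structural observation is that only exponents that are multiples of $k$ occur. Hence $H(n)=0$ whenever $n\not\equiv0\pmod{k}$, and in particular the constituents satisfy $H_1(n)=\cdots=H_{k-1}(n)=0$, which is the second assertion.

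For the first assertion I would treat the remaining case $n\equiv0\pmod{k}$. Writing $n=km$ with $m\geq0$, the coefficient is $H(n)=c\binom{m+d-1}{d-1}$. Substituting $m=n/k$ and writing the binomial coefficient as a product,
\[
\binom{m+d-1}{d-1}=\frac{1}{(d-1)!}\prod_{j=1}^{d-1}\left(\frac{n}{k}+j\right)=\frac{1}{(d-1)!\,k^{d-1}}\prod_{j=1}^{d-1}(n+jk),
\]
where I factor $1/k$ out of each of the $d-1$ factors, yields exactly the claimed expression $\frac{c}{(d-1)!\,k^{d-1}}(n+k)(n+2k)\cdots(n+(d-1)k)$. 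It then remains only to note that the constituent $H_0$ is by definition the unique polynomial agreeing with $H(n)$ on the arithmetic progression $\{0,k,2k,\dots\}$; since the displayed product polynomial agrees with $H$ at these infinitely many points, it must coincide with $H_0$ as polynomials. I expect no genuine obstacle here: the only point requiring a line of care is this last identification of the constituent with the product formula, valid then for all $n$ including negative values, which is immediate from the fact that a polynomial is determined by its values at infinitely many arguments.
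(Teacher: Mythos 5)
Your proof is correct and follows essentially the same route as the paper: expand $c/(1-t^k)^d$ via the binomial series in $t^k$, note that only exponents divisible by $k$ appear, and match the resulting coefficients to the product formula. The only (immaterial) difference is in the final identification of $H_0$: you use agreement with $H$ at infinitely many points $n=km$, whereas the paper checks agreement at the $d$ points $0,k,\dots,(d-1)k$ and invokes the degree bound $\deg H_0\leq d-1$; both are valid.
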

\begin{proof}
	 $H_{1}(n)=\cdots=H_{k-1}(n)=0$ is trivial. We need to show that $H_{0}(n)=\frac{c}{(d-1)!k^{d-1}}(n+k)(n+2k)\cdots(n+(d-1)k)$.
	
	Note that one can write
	\begin{align*}
	P(t)&=\frac{c}{(1-t^k)^d}\\
	&=c\sum_{n=0}^{\infty}\binom{n+d-1}{n}t^{kn}.
	\end{align*}
	Since $\deg(H_0(n))\leq d-1$, $H_{0}(n)$ should satisfy
	\begin{align*}
	H_0(0)&=c\binom{0+d-1}{0}=c,\\
	H_0(k)&=c\binom{1+d-1}{1}=cd,\\
	H_0(2k)&=c\binom{2+d-1}{2}=\frac{c}{2}d(d+1),\\
	\vdots&\\
	H_0((d-1)k)&=c\binom{(d-1)+d-1}{d-1}=\frac{c}{(d-1)!}d(d+1)\cdots(2d-3)(2d-2).
	\end{align*}
	This is exactly the case when
	\[H_{0}(n)=\frac{c}{(d-1)!k^{d-1}}(n+k)(n+2k)\cdots(n+(d-1)k),\]
	
	since for $0\leq m\leq d-1$,
	\begin{align*}
	H_{0}(mk)&=\frac{c}{(d-1)!k^{d-1}}(mk+k)(mk+2k)\cdots(mk+(d-1)k)\\
	&=\frac{ck^{d-1}}{(d-1)!k^{d-1}}\prod_{i=1}^{d-1}(m+i)\\
	&=c\binom{m+d-1}{d-1}=c\binom{m+d-1}{m}
	\end{align*} 
	as required.
\end{proof}
\begin{exm}
	Let 
	\[P(t)=\frac{1}{(1-t^2)^2}=\sum_{n\geq0}H(n)t^n.\]
	Then 
	\[H(n)=\begin{cases}\frac{1}{2}(2+n)&n\equiv0\mod2,\\
	0&n\equiv1\mod2.
	\end{cases}\]
\end{exm}
\begin{cor}\label{cor:single.power}
	For $0\leq j \leq dk-1$, where $j\equiv r \mod k$ for $0\leq r \leq k-1$, let
	\[P(t)=\frac{c_jt^j}{(1-t^k)^d}.\]
	Then we have $H_{r}(n)=\frac{c_j}{(d-1)!k^{d-1}}(n+k-j)(n+2k-j)\cdots(n+(d-1)k-j)$ and all other $H_i(n)=0$.
\end{cor}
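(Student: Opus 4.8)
The plan is to reduce the statement to the preceding lemma by exploiting the fact that multiplying a generating function by $t^j$ simply shifts its coefficient sequence. First I would apply that lemma to
\[\frac{c_j}{(1-t^k)^d}=\sum_{n\geq0}G(n)t^n,\]
so that $G(n)=\frac{c_j}{(d-1)!k^{d-1}}(n+k)(n+2k)\cdots(n+(d-1)k)$ when $n\equiv0\bmod k$ and $G(n)=0$ otherwise. Since $P(t)=t^j\cdot\frac{c_j}{(1-t^k)^d}$, the coefficient of $t^m$ in $P(t)$ equals $G(m-j)$ for $m\geq j$ and $0$ for $0\leq m<j$; in particular $H(m)=G(m-j)$ for all $m\geq j$.

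Next I would isolate which constituent survives. Because $G(m-j)\neq0$ forces $m-j\equiv0\bmod k$, that is $m\equiv j\equiv r\bmod k$, the coefficient $H(m)$ vanishes whenever $m\not\equiv r\bmod k$; hence $H_i=0$ for every $i\neq r$. For the surviving residue class, substituting into $G$ gives, for each $m\geq j$ with $m\equiv r\bmod k$,
\[H(m)=G(m-j)=\frac{c_j}{(d-1)!k^{d-1}}(m+k-j)(m+2k-j)\cdots(m+(d-1)k-j).\]

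Finally I would identify this expression with the constituent $H_r$. By the Proposition cited above, a $P(t)$ of this shape (with numerator $c_jt^j$ of degree $j\leq dk-1$) guarantees that $H$ is a quasipolynomial of degree $\leq d-1$, so each $H_i$ is a genuine polynomial of degree at most $d-1$. The displayed right-hand side is a polynomial in $m$ of degree $d-1$ that agrees with $H_r(m)$ at the infinitely many integers $m\geq j$ with $m\equiv r\bmod k$; two polynomials of degree $\leq d-1$ coinciding at more than $d-1$ points are equal, so $H_r(n)=\frac{c_j}{(d-1)!k^{d-1}}(n+k-j)(n+2k-j)\cdots(n+(d-1)k-j)$, as claimed.

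The only subtlety, and thus the main (modest) obstacle, is that the shift identity $H(m)=G(m-j)$ is valid only on the tail $m\geq j$, whereas for the finitely many $0\leq m<j$ with $m\equiv r\bmod k$ the generating function forces $H(m)=0$. This causes no inconsistency: the constituent is already pinned down by the infinitely many tail values, and as a sanity check one observes that the roots $n=j-k,j-2k,\ldots,j-(d-1)k$ of the claimed formula are exactly the nonnegative integers below $j$ lying in the class $m\equiv r\bmod k$, so the formula indeed vanishes at those points automatically. Hence the argument amounts to bookkeeping the shift and confirming that agreement on the tail determines the constituent.
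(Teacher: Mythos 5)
Your proposal is correct and matches the route the paper intends: the corollary is stated without proof precisely because it follows from the preceding lemma by the coefficient shift $P(t)=t^j\cdot\frac{c_j}{(1-t^k)^d}$, which is exactly your argument. Your additional care about the finitely many coefficients with $0\leq m<j$ (and the observation that the claimed formula vanishes there automatically) is a worthwhile detail the paper leaves implicit, but it does not change the approach.
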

Another trivial but useful result is the following.
\begin{lem}\label{lem:sum}
	Let
	\begin{align*}
	P(t)&=\frac{U(t)}{(1-t^k)^d}=\sum_{n\geq0}H(n)t^n&P'(t)&=\frac{U'(t)}{(1-t^k)^d}=\sum_{n\geq0}H'(n)t^n.
	\end{align*}
	Then the Hilbert quasipolynomial of $P(t)+P'(t)$ is $H(n)+H'(n)$.
\end{lem}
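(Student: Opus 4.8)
The plan is to reduce everything to the additivity of power-series coefficients together with the uniqueness of the quasipolynomial attached to a rational function of the given shape. First I would observe that since $P$ and $P'$ share the denominator $(1-t^k)^d$, their sum is
\[P(t)+P'(t)=\frac{U(t)+U'(t)}{(1-t^k)^d},\]
and $\deg(U+U')\leq kd-1$, so the Proposition of \cite{BS/18} quoted above applies to $P+P'$ and furnishes a Hilbert quasipolynomial $G(n)$ of degree $\leq d-1$ and period dividing $k$ with $P(t)+P'(t)=\sum_{n\geq0}G(n)t^n$.

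Next I would compare coefficients. Adding the two given expansions termwise yields
\[P(t)+P'(t)=\sum_{n\geq0}\bigl(H(n)+H'(n)\bigr)t^n,\]
so that $G(n)=H(n)+H'(n)$ for every $n\geq0$. It then remains only to identify $G$ with the function $H+H'$ as \emph{quasipolynomials}, not merely as sequences of values.

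For this I would use that both $H$ and $H'$ have period dividing $k$: writing their constituents as $H_0,\ldots,H_{k-1}$ and $H_0',\ldots,H_{k-1}'$, the termwise sum $H+H'$ is again a quasipolynomial of period dividing $k$ whose $i$-th constituent is the polynomial $H_i+H_i'$. Since each residue class $i$ modulo $k$ contains infinitely many elements of $\N_0$, and on the integers of that class both $G$ and $H+H'$ agree with polynomials coinciding at infinitely many points, those polynomials are identical; hence $G$ and $H+H'$ coincide constituent by constituent. This identifies the Hilbert quasipolynomial of $P+P'$ with $H+H'$.

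There is no genuine obstacle here; the statement is essentially the linearity of the coefficient functional composed with the uniqueness part of \cite{BS/18}. The only point requiring a line of care is the last one, namely passing from equality of the two sequences on $\N_0$ to equality of the underlying polynomial constituents on each congruence class, which is exactly where one uses that every class modulo $k$ meets $\N_0$ infinitely often.
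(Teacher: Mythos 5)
Your proposal is correct and follows essentially the same route as the paper, whose entire proof is the single observation that $P(t)+P'(t)=\sum_{n\geq0}(H(n)+H'(n))t^n$ by termwise addition of coefficients. The extra steps you include---invoking the uniqueness statement from \cite{BS/18} and matching constituents on each residue class via infinitely many agreement points---are a harmless (and rigorous) elaboration of what the paper treats as definitional, not a different argument.
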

\begin{proof}
	By definition $P(t)+P'(t)=\sum_{n\geq0}(H(n)+H'(n))t^n$.
\end{proof}
Hence in general, for a rational function
\[P(t)=\frac{U(t)}{(1-t^k)^{d}},\] one can write
\begin{align*}
P_i(t)&:=\frac{U_i(t)}{(1-t^{k})^{d}},&P(t)&=\sum_{i=0}^{k-1}P_i(t)=\sum_{i=0}^{k-1}\frac{U_i(t)}{(1-t^k)^d},
\end{align*}
where $U_i(t)$ denote the polynomial that only contains all the terms in $U(t)$ whose degree is $i\mod k$. Each constituent $H_i(n)$ of the Hilbert quasipolynomial $H(n)$ of $P(t)$ in fact only comes from each $P_{i}(t)$.
\section{Roots of Hilbert quasipolynomials}
 In this section we prove the two main theorems of this paper: Theorem \ref*{thm:RV.quasi} and Theorem \ref{thm:quasi.global}.
 
Start with the following lemma. For $a\in\Z, k\in\N_+$, and $0\leq i\leq k-1$, let 
 \[h_{a,k,i}(x)=\begin{cases}(x+k-i)(x+2k-i)\cdots(x+ak-i),&a\geq1,\\
 1,&a<1,\end{cases}\]
 and
 \[S_{a,k,i}=\{p\in\C[x]\setminus\{0\}|p=h_{a,k,i}v,v\in\C[x]\textrm{ has all its roots on }\real(x)=-((a+1)k-2i)/2\}.\]
 \begin{lem}\label{lem:realline}
 	Let $\alpha\in\C$ with $|\alpha|=1$, and $f\in S_{a,k,i}$ for some $a,k,i$. Then 
 	\begin{equation}
 	f(x-k)-\alpha f(x)\in S_{a-1,k,i}.
 	\end{equation}
 \end{lem}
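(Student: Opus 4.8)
The plan is to peel off the ``trivial'' factor first and then reduce to a clean root-location problem that an argument-principle count resolves. Write $f=h_{a,k,i}\,v$, where $v$ has all its roots on $\real(x)=-((a+1)k-2i)/2$, and abbreviate the target line for $S_{a-1,k,i}$ by $L\colon \real(x)=-(ak-2i)/2$. For $a\ge 1$ the elementary index-shift identities
\[
h_{a,k,i}(x-k)=(x-i)\,h_{a-1,k,i}(x),\qquad h_{a,k,i}(x)=(x+ak-i)\,h_{a-1,k,i}(x)
\]
hold, and substituting them yields the factorisation
\[
f(x-k)-\alpha f(x)=h_{a-1,k,i}(x)\,w(x),\qquad w(x):=(x-i)\,v(x-k)-\alpha\,(x+ak-i)\,v(x).
\]
Hence it suffices to prove that $w$ has all of its roots on $L$; the case $a\le 0$ is identical with $h_{a,k,i}=h_{a-1,k,i}=1$ and $w(x)=v(x-k)-\alpha v(x)$.

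Next I set $A(x)=(x-i)\,v(x-k)$ and $B(x)=(x+ak-i)\,v(x)$, so that $w=A-\alpha B$, and record two facts relating $A,B$ to $L$. First, the roots \emph{separate}: the roots of $A$ are $x=i$ together with $\rho+k$ for $\rho$ a root of $v$, all of which lie strictly to the right of $L$, while the roots of $B$ are $x=i-ak$ together with the roots of $v$, all strictly to the left of $L$; in particular $A$ and $B$ have no common root. Second, $|A|=|B|$ on $L$: a direct computation of real and imaginary parts gives $|x-i|=|x+ak-i|$ for $x\in L$, and for each root $\rho$ of $v$ one has $|x-\rho|=|x-k-\rho|$ on $L$ (the two differences share their imaginary part and have real parts $\pm k/2$), whence $|v(x-k)|=|v(x)|$ on $L$. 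This is a Hermite--Biehler-type configuration: zeros on one side of $L$, poles on the other, equal modulus on $L$.

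With these in hand the conclusion follows from a winding argument applied to $R:=A/B$. Since $A,B$ share no root, $R$ is a rational function of degree $\deg A$ in lowest terms, with all zeros strictly right of $L$, all poles strictly left, and $|R|=1$ on $L$. As $x$ traverses $L$ from bottom to top, each zero and each pole contributes $-\pi$ to $\Delta\arg R$, for a total change $-2\pi\deg A$; moreover $\real(R'/R)<0$ on $L$, since each summand $\real\tfrac{1}{x-\beta}$ has the sign of $\real(x)-\real(\beta)$, and this sign is negative for every zero and, after the subtraction, for every pole. Thus $\arg R$ decreases \emph{strictly monotonically} along $L$, and because $R\to 1$ at both ends of $L$ the map wraps around the unit circle exactly $\deg A$ times monotonically. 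Consequently, for each $\alpha$ with $|\alpha|=1$, the equation $R(x)=\alpha$ has exactly $\deg A$ solutions on $L$; as $B$ does not vanish on $L$, these are precisely the roots of $w$ there, and comparison with $\deg w=\deg A$ (valid for $\alpha\ne 1$) shows that all roots of $w$ lie on $L$, i.e.\ $w\in S_{a-1,k,i}$.

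The main obstacle is exactly this last step, namely controlling the \emph{global} root count so that no root of $w$ escapes off $L$: the separation of the zeros of $A$ from those of $B$ by $L$, combined with $|A|=|B|$ on $L$, is what forces $\real(R'/R)<0$, and it is the resulting strict monotonicity that upgrades the mod-$2\pi$ winding count into an exact count of preimages. Two points need slight extra care. When $\alpha=1$ the leading coefficients of $A$ and $\alpha B$ coincide, so $\deg w$ drops by one; this is consistent, since $1$ is exactly the common limit of $R$ at the two ends of $L$, so one of the $\deg A$ solutions of $R=1$ runs off to infinity and only $\deg A-1=\deg w$ finite roots remain, all on $L$. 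Finally, the degenerate cases ($v$ constant, or $a\le 0$ with $A=v(x-k)$, $B=v(x)$) are covered verbatim by the same computation, and throughout one uses only that $i$ and the defining constants of $L$ are real.
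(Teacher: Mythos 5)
Your proof is correct, but it finishes by a genuinely different mechanism than the paper's. Both arguments begin identically: you peel off $h_{a-1,k,i}$ via the shift identities $h_{a,k,i}(x-k)=(x-i)\,h_{a-1,k,i}(x)$ and $h_{a,k,i}(x)=(x+ak-i)\,h_{a-1,k,i}(x)$, reduce to the cofactor $w=A-\alpha B$ with $A(x)=(x-i)v(x-k)$ and $B(x)=(x+ak-i)v(x)$, and observe that the roots of $A$ and $B$ pair off symmetrically about the critical line $L$, those of $A$ strictly to the right and those of $B$ strictly to the left. The paper (with $s=A$, $r=B$ in its notation) then concludes pointwise, by contradiction: if $w(\beta)=0$ with $\beta$ strictly to the left of $L$, then $|A(\beta)|=|\alpha||B(\beta)|=|B(\beta)|$, yet the symmetric pairing gives $|\beta-\mu|<|\beta-\nu|$ factor by factor (each $\mu$ a root of $B$, each $\nu$ its mirror root of $A$), so $|B(\beta)|<|A(\beta)|$ --- no winding, no degree count, no case split on $\alpha$. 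You instead run a global Hermite--Biehler/argument-principle count: $R=A/B$ is unimodular on $L$, $\arg R$ is strictly decreasing there since $\real(R'/R)<0$, so $R=\alpha$ has exactly $\deg A$ solutions on $L$ (one fewer when $\alpha=1$, where $\deg w$ drops), and comparison with $\deg w$ accounts for every root of $w$. Your route costs more: you must justify the total winding, the strict monotonicity, the coprimality of $A$ and $B$, and the $\alpha=1$ degree drop, all of which the paper's two-line inequality sidesteps. But it also buys more: it shows the roots of $w$ on $L$ are simple, gives their exact number, and shows they move monotonically (and interlace) as $\alpha$ rotates around the unit circle --- none of which follows from the contradiction argument. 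One shared blind spot worth noting: if $v$ is constant, $a\le 0$ and $\alpha=1$, then $f(x-k)-\alpha f(x)\equiv 0$, which lies in no class $S_{a-1,k,i}$ since the zero polynomial is excluded; this is an edge case of the statement itself, and neither your proof nor the paper's addresses it.
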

 \begin{proof}
 	Note that $h_{a-1,k,i}$ divides bot $f(x)$ and $f(x-k)$. Let
 	\begin{align*}
 	f(x)&=h_{a-1,k,i}(x)r(x)\\
 	f(x-1)&=h_{a-1,k,i}(x)s(x)\\
 	g(x)&=s(x)-\alpha r(x).
 	\end{align*}
 	We need to prove that $g$ has all its roots on the line $\real(x)=-(ak-2i)/2$. By our definitions, for every root $\mu$ of $r(x)$ that lies on the left half plane $\real(x)<-(ak-2i)/2$ there is a corresponding root $\nu$ of $s(x)$ located symmetrically on the right half plane $\real(x)>-(ak-2i)/2$, and vice versa. Let $\beta\in\C$ a root of $g$. Then $g(\beta)=s(\beta)-\alpha r(\beta)=0$, giving \[|s(\beta)|=|r(\beta)|.\]
 	If $\real(\beta)<-(ak-2i)/2$ say, then
 	\[|\beta-\mu|<|\beta-\nu|,\]
 	for all pairs of corresponding roots $\mu,\nu$, giving a contradiction as
 	\[|r(\beta)|=\prod_{\mu}|\beta-\mu|<\prod_{\nu}|\beta-\nu|=|s(\beta)|.\]
 	An analogous arguments shows that there are also no roots with $\real(\beta)>-(ak-2i)/2$. Hence all the roots of $g$ lie on the line $\real(x)=-(ak-2i)/2$ as required.
 \end{proof}
With this lemma, we prove:
 \begin{thm}\label{thm:RV.quasi} 
 Let \[P(t)=\frac{U(t)}{(1-t^k)^{d}}=\sum_{n\geq0}H(n)t^n,\] and let the notation be as above.
 For any $i$, suppose that $U_{i}(1)\neq 0$ and all the non-zero roots are on the unit circle. Then $H_{i}\in S_{d-q_{i}-1,k,i}$.
 \end{thm}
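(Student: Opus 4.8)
The plan is to reduce the statement to the single non-trivial constituent coming from $P_i(t)=U_i(t)/(1-t^k)^d$, and then to build $U_i$ up one linear factor at a time, tracking membership in the sets $S_{a,k,i}$ by repeated application of Lemma~\ref{lem:realline}. First I would invoke the decomposition $P(t)=\sum_i P_i(t)$ together with Lemma~\ref{lem:sum} to note that the constituent $H_i$ depends only on $P_i$, so it suffices to analyse $P_i$. Writing $U_i(t)=t^i W_i(t^k)$ with $W_i(T)=c_i+c_{k+i}T+\dots+c_{e_i}T^{q_i}$ of degree $q_i$, the hypothesis that $U_i(1)\neq 0$ and that all non-zero roots of $U_i$ lie on the unit circle translates into the factorisation
\[
U_i(t)=c_{e_i}\,t^i\prod_{j=1}^{q_i}\bigl(t^k-\beta_j\bigr),\qquad |\beta_j|=1,
\]
since the roots of $W_i$ are exactly the $k$-th powers of the non-zero roots of $U_i$; in particular $|c_i|=|W_i(0)|=|c_{e_i}|\prod_j|\beta_j|\neq 0$, so all $q_i$ roots of $W_i$ genuinely sit on the unit circle.

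Next I would set up the base case and the dictionary between the numerator and the constituent. By Corollary~\ref{cor:single.power} applied to $c_{e_i}t^i$ (with $r=i$ and $q=0$), the series $c_{e_i}t^i/(1-t^k)^d$ has $i$-th constituent
\[
H_i^{(0)}(n)=\frac{c_{e_i}}{(d-1)!\,k^{d-1}}\,h_{d-1,k,i}(n),
\]
which lies in $S_{d-1,k,i}$ because it is $h_{d-1,k,i}$ times a non-zero constant, whose empty set of roots trivially satisfies the line condition. The dictionary is that multiplying the numerator by a factor $(t^k-\beta)$ acts on any series $P(t)=\sum_n H(n)t^n$ by $(t^k-\beta)P(t)=\sum_n\bigl(H(n-k)-\beta H(n)\bigr)t^n$; since $n$ and $n-k$ lie in the same residue class modulo $k$, the $i$-th constituent is sent to $H_i(n-k)-\beta H_i(n)$. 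This is exactly the operation $f\mapsto f(x-k)-\alpha f(x)$ governed by Lemma~\ref{lem:realline}.

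Finally I would run the induction on the factors $\beta_1,\dots,\beta_{q_i}$. Starting from $H_i^{(0)}\in S_{d-1,k,i}$ and absorbing one factor $(t^k-\beta_j)$ at each step, every step applies Lemma~\ref{lem:realline} with $\alpha=\beta_j$ (legitimate since $|\beta_j|=1$), dropping the index $S_{d-j,k,i}\to S_{d-j-1,k,i}$. After all $q_i$ factors are absorbed the numerator equals $U_i(t)$, the resulting constituent is exactly $H_i$, and it lies in $S_{d-1-q_i,k,i}=S_{d-q_i-1,k,i}$, as required. I expect the main work to be in making the numerator-to-constituent dictionary watertight: checking that the shift $t^k$ preserves residue classes so that each $(t^k-\beta_j)$ really induces the single-variable operation of Lemma~\ref{lem:realline} on $H_i$ alone, and that each intermediate numerator still has degree $\le kd-1$ so that the quasipolynomial criterion guarantees $H_i^{(j)}$ is an honest polynomial of degree $\le d-1$. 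The unit-circle hypothesis enters in full precisely at the point where one must know that all $q_i$ roots $\beta_j$ have modulus $1$ (equivalently $c_i\neq 0$), so that Lemma~\ref{lem:realline} is applicable at every one of the $q_i$ reduction steps.
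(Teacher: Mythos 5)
Your proposal is correct and is essentially the paper's own proof: both arguments reduce to $P_i(t)=U_i(t)/(1-t^k)^d$, factor $U_i(t)$ as $c_{e_i}t^i\prod_{j=1}^{q_i}(t^k-\beta_j)$ with $|\beta_j|=1$, start from the monomial case via Corollary \ref{cor:single.power}, and absorb the factors $(t^k-\beta_j)$ one at a time, each step acting on the $i$-th constituent as $H_i(x)\mapsto H_i(x-k)-\beta_j H_i(x)$ and descending through the sets $S_{a,k,i}$ by Lemma \ref{lem:realline}. If anything, your write-up is more explicit than the paper's, which runs the same induction on $q_i$ but leaves the factorisation of $U_i$ and the numerator-to-constituent dictionary as ``one can check.''
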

 \begin{proof}
 	We proceed by induction on $q_i$. 
 	
 	For $q_i=0$, $U_i(t)=c_{i}t^{i}$ and Corollary \ref{cor:single.power} gives 
 	\[H_{i}(n)=\frac{c_{i}}{(d-1)!k^{d-1}}(n+k-i)(n+2k-i)\cdots(n+(d-1)k-i),\]
 	which belongs to $S_{d-1,k,i}$.
 	
 	Now, if $H_i\in S_{d-1-q_{i},k,i}$ is the Hilbert polynomial corresponding to $U_i(t)$, then one can check that the Hilbert polynomial corresponding to $(t^k-\alpha)U_{i}(t)=t^{i}(t^k-\alpha)U_{i}'(t)$ is $H_i^{*}(x)=H_i(x-k)-\alpha H_i(x)$. By hypothesis $|\alpha|=1$, and hence Lemma \ref{lem:realline} implies that $H_{i}^{*}\in S_{d-2-q_{i},k,i}$ as required.
 \end{proof}

As we discussed, this can be seen as a generalised extension of the result in \cite{R-V/02}. It shows how the roots of each \textit{local} constituent $H_{i}$ of Hilbert quasipolynomial $H$ are arranged nicely on one critical line $\real(x)=-((d-q_{i})k-2i)/2$, when all the non-zero roots of $U_{i}$ are on the unit circle. 

Furthermore, it turns out that we can say more about some of the integer roots of Hilbert quasipolynomials even when not all the non-zero roots of $U_{i}(t)$ are on the unit circle. 
 \begin{pro}\label{pro:integer.roots}
 	Let the notation be as above. Suppose $U_{i}(t)\neq0$ with degree $e_{i}=q_{i}k+i$. Then $-(k-i),-(2k-i),\ldots,-((d-1-q)k-i)$ are members of the roots of $H_{i}(n)=0$.
 	\end{pro}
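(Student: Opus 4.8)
The plan is to reduce everything to the single-monomial computation of Corollary \ref{cor:single.power} and then exploit the fact that the claimed integer points are \emph{simultaneous} roots of every monomial contribution, so that their sum vanishes. First I would write $U_{i}(t)=\sum_{m=0}^{q_{i}}c_{mk+i}t^{mk+i}$ and invoke Lemma \ref{lem:sum} to express the constituent $H_{i}$ as the sum of the Hilbert polynomials attached to the individual terms $c_{mk+i}t^{mk+i}$. Applying Corollary \ref{cor:single.power} with $j=mk+i$ (so $r=i$), the term coming from $c_{mk+i}t^{mk+i}$ contributes
\[
\frac{c_{mk+i}}{(d-1)!\,k^{d-1}}\prod_{\ell=1}^{d-1}\bigl(n+\ell k-mk-i\bigr)
\]
to $H_{i}(n)$. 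Note that this step is robust to any vanishing intermediate coefficients $c_{mk+i}$, which is why the hypothesis that the non-zero roots of $U_{i}$ lie on the unit circle is \emph{not} needed here, in contrast to Theorem \ref{thm:RV.quasi}.

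Next I would substitute the candidate root $n=-(sk-i)=-sk+i$ for $1\le s\le d-1-q_{i}$ into each such product. Each factor becomes $(\ell-s-m)k$, so the $m$-th product equals $k^{d-1}\prod_{\ell=1}^{d-1}(\ell-s-m)$, which vanishes precisely when the index $\ell=s+m$ occurs among $1,\ldots,d-1$. The one thing to check is the range: for $m\in\{0,1,\ldots,q_{i}\}$ and $s\in\{1,\ldots,d-1-q_{i}\}$ one has $1\le s+m\le(d-1-q_{i})+q_{i}=d-1$, so the factor indexed by $\ell=s+m$ is genuinely present and forces the product to zero. Hence every monomial contribution vanishes at $n=-(sk-i)$, and therefore so does their sum $H_{i}$.

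The argument is essentially bookkeeping, so I expect no serious obstacle; the point that deserves care — and the conceptual heart of the statement — is that the listed integer points are common zeros of \emph{all} the monomial pieces at once, rather than roots merely scattered among the different pieces. It is precisely the uniform product structure $\prod_{\ell=1}^{d-1}(n+\ell k-j)$ supplied by Corollary \ref{cor:single.power}, together with the bound $s+m\le d-1$, that guarantees this alignment. Without controlling $q_{i}$ (the degree of $U_{i}$) relative to $d$, the shared factor could fall outside the range $1,\ldots,d-1$ and the simultaneous cancellation would fail; this is exactly why the list of guaranteed roots terminates at $-((d-1-q_{i})k-i)$.
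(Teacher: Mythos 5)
Your proposal is correct and takes essentially the same approach as the paper: both decompose $U_i$ into its monomials via Lemma \ref{lem:sum} and Corollary \ref{cor:single.power}, and both hinge on the same range check that the shared factor index stays within $1,\ldots,d-1$. The only cosmetic difference is that the paper extracts $\prod_{j=1}^{d-1-q_i}(n+jk-i)$ as a common polynomial factor of all the summands, whereas you evaluate each summand at the candidate roots $n=-(sk-i)$ and show they all vanish — the same argument in pointwise form.
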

 \begin{proof}
 	For each $i$, let $e_i=q_{i}k+i$. By applying Corollary \ref{cor:single.power} and Lemma \ref{lem:sum}, we have
\begin{align*}
 	H_{i}(n)&=\frac{1}{(d-1)!k^{d-1}}\sum_{a=0}^{q_{i}}c_{ak+i}(n+k-(ak+i))(n+2k-(ak+i))\cdots(n+(d-1)k-(ak+i))\\
 	&=\frac{C_i(n)}{(d-1)!k^{d-1}}\prod_{j=1}^{d-1-q_{i}}(n+jk-i),
\end{align*}
where $C_i(x)\in\C[x]$ since $\prod_{j=1}^{d-1-q_{i}}(n+jk-i)$ is a common factor for $(n+k-(ak+i))(n+2k-(ak+i))\cdots(n+(d-1)k-(ak+i))$.
Therefore $n=-(k-i),-(2k-i),\ldots,-((d-1-q)k-i)$ satisfies $H_i(n)=0$.
 \end{proof}
Note that Proposition \ref{cor:single.power} has no restriction on the roots of $U_{i}$. 
\begin{thm}\label{thm:quasi.global}
	Let $H_{\times}=\prod_{0\leq i\leq k-1}H_{i}$. If $U_{i}\neq0$ for all $i$, then $n\in\{-1,-2,\ldots,-(dk-e-1)\}$ satisfies $H_{\times}(n)=0$.
\end{thm}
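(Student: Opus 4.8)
The plan is to exploit the fact that a product of polynomials vanishes at a point precisely when one of its factors does. Thus it suffices to show that for each integer $m$ with $1\le m\le dk-e-1$, there is an index $i\in\{0,\ldots,k-1\}$ for which $H_i(-m)=0$. The natural tool is Proposition \ref{pro:integer.roots}, which guarantees that $H_i$ vanishes at $-(k-i),-(2k-i),\ldots,-((d-1-q_i)k-i)$; equivalently, $H_i(-(jk-i))=0$ for every integer $j$ with $1\le j\le d-1-q_i$. Since all of these guaranteed roots are congruent to $i$ modulo $k$, a given target $-m$ can only be reached through the index $i\equiv -m\pmod{k}$, which is uniquely determined in $\{0,\ldots,k-1\}$.

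Fix $m$ in the stated range and set $i=(-m)\bmod k$. Then $m+i$ is a positive multiple of $k$, so $j:=(m+i)/k$ is a positive integer and $-m=-(jk-i)$. It therefore remains to verify the single inequality $j\le d-1-q_i$, that is, $m\le (d-1-q_i)k-i=(d-1)k-e_i$, where $e_i=q_ik+i$ is the degree of $U_i$. Because $j\ge 1$ holds automatically from $m\ge 1$ and $i\ge 0$, the lower end of the guaranteed list is never an obstruction, and the entire content of the theorem is packed into this one upper bound; the hypothesis $U_i\neq 0$ for all $i$ is exactly what ensures that every constituent is genuinely described by the formula of Proposition \ref{pro:integer.roots}.

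The key computation, and the step I expect to be delicate, is to confirm this upper bound uniformly over the whole range. I would argue class by class: within the residue class $i$, the largest admissible target is $M_i:=\max\{m:1\le m\le dk-e-1,\ m\equiv -i\pmod{k}\}$, and if $M_i\le (d-1)k-e_i$, then every smaller $m$ in the class is covered as well. Writing $s=(e-i)\bmod k$ and unwinding the residues gives the clean identity $M_i=(d-1)k-(e-s)$, where $\bar e_i:=e-s$ is the largest degree that a term of $U$ in class $i$ could possibly carry. Since the actual degree satisfies $e_i\le\bar e_i$, one obtains $M_i=(d-1)k-\bar e_i\le (d-1)k-e_i$, which is the desired bound.

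I expect the main obstacle to be bookkeeping rather than conceptual: the identity $M_i=(d-1)k-\bar e_i$ holds with \emph{equality}, so the bound is tight and the most negative value $n=-(dk-e-1)$ is only just covered, by the constituent attached to the class $(e+1)\bmod k$ rather than to the top class $e\bmod k$ of $U$. Getting every residue reduction right — in particular that $M_i\le 0$ precisely when class $i$ contributes no target in the range, so that classes with $q_i\ge d-1$ cause no trouble — is where care is needed. Once this uniform inequality is in hand, combining it with Proposition \ref{pro:integer.roots} across all residues yields $H_\times(n)=0$ for every $n\in\{-1,-2,\ldots,-(dk-e-1)\}$.
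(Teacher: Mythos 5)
Your proposal is correct and follows essentially the same route as the paper: both rest on Proposition \ref{pro:integer.roots} and a residue-class covering argument showing that the guaranteed roots $-(jk-i)$, $1\le j\le d-1-q_i$, sweep out all of $\{-1,\ldots,-(dk-e-1)\}$. The only difference is bookkeeping — the paper anchors the verification on the top index $r$ with $e=q_rk+r$ and the inequalities $q_i\le q_r$ (for $i\le r$) and $q_i<q_r$ (for $i>r$), whereas you bound each class via $e_i\le \bar e_i=e-((e-i)\bmod k)$; your version is an equivalent, and in fact more explicit, form of the paper's terse final step.
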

\begin{proof}
Since $U_i(t)\neq0$ for any $i$, $H_{i}(n)\neq0$ and Proposition \ref{pro:integer.roots} implies that  $n=-(k-i),-(2k-i),\ldots,-((d-1-q_i)k-i)$ satisfies $H_i(n)=0$. As $e=\deg(U(t))$,  there has to be an integer $r$ such that $e=e_{r}=q_{r}k+r$, $0\leq q_0,\ldots,q_{r-1}\leq q_{r}$, and $0\leq q_{r+1},\ldots,q_{k-1}<q_{r}$. Therefore, looking at the roots of $H_{i}(n)$, we have at least $-1, -2, \ldots,-((d-1-(q_r-1))k-(r+1))=-(dk-q_rk-r-1)=-(dk-e-1)$ satisfying $H_{\times}(n)=0$ as required.
\end{proof}
Theorem \ref{thm:quasi.global} implies that no matter how your rational function 
\[P(t)=\frac{U(t)}{(1-t^k)^{d}}=\sum_{n\geq0}H(n)t^n\]
looks like, globally $H_{\times}(n)$ takes trivial integer roots $n\in\{-1,-2,\ldots,-(dk-e-1)\}$, which are only determined by the degree of the numerator and the denominator of $P(t)$. In particular this implies $h_{d-e-1}(n)\mid H(n)$ if $k=1$.
\begin{acknowledgements}
	I would like to thank Claudia Alfes-Neumann, Christopher Voll, and ZiF, Bielefeld for organizing a nice workshop "Hecke operators, Ehrhart theory, and automorphic forms" that motivated this paper. Also thank Kyeongmin Kim and Seok Hyeong Lee for computational helps.
\end{acknowledgements}

	\bibliographystyle{amsplain}
	\bibliography{Lee_quasipoly_arXiv}
\end{document}